\newcommand{\Z}{{\mathbb Z}}
\newcommand{\N}{{\mathbb N}}
\newcommand{\R}{{\mathbb R}}
\newcommand{\Set}{{\mathcal S}}
\newtheorem{theorem}{Theorem}[section]
\newtheorem{proposition}[theorem]{Proposition}
\newtheorem{corollary}[theorem]{Corollary}
\newtheorem{lemma}[theorem]{Lemma}
\newtheorem{question}[theorem]{Question}
\theoremstyle{definition}
\newtheorem{remark}[theorem]{Remark}
\newtheorem{example}[theorem]{Example}
\title [on the occurrence of large gaps in small contingency tables]
{on the occurrence of large gaps \\ in small contingency tables}
\author{Edwin O'Shea}
\thanks{This research was made possible by visits to the 
Statistical and Applied Mathematical Sciences Institute (SAMSI) 
as part of their thematic program on ``Algebraic Methods in Systems Biology and Statistics''. 
The author is supported by The De Br{\' u}n Centre for Computational Algebra at NUI Galway which 
is funded under Science Foundation Ireland's Mathematics Initiative.}
\address{De Br{\' u}n Centre for Computational Algebra, School of Mathematics, 
NUI Galway, 1 University Road, Galway, IRELAND}
\email{edwin.oshea@nuigalway.ie}
\date{\today}
\begin{document}

\begin{abstract} 
Examples of small contingency tables on binary random variables with large integer 
programming gaps on the lower bounds of cell entries were constructed by Sullivant. 
We argue here that the margins for which these constructed large gaps occur are 
rarely encountered, thus reopening the question of {\em whether linear 
programming is an effective heuristic for detecting disclosures when releasing margins of 
multi-way tables}. The notion of {\em rarely encountered} is made precise through 
the language of standard pairs. 
\end{abstract}

\maketitle

\section{Introduction}

When governmental bodies like census bureaus collect data there is a tension 
between publicly releasing as much information as possible while at the same time 
striving to ensure that any one individual's private data cannot be discerned 
from the released information. One such case of this is when each piece of private 
data is recorded in a cell in a multi-dimensional contingency table and the released data 
is a collection of smaller margin tables (or simply {\em margins}) which portray the 
interactions between some subsets of the variables. These margins are simply 
higher dimensional analogues of row sums and column sums on the contingency table. So how 
would we know that a person's individual data is protected despite the release of possibly 
many margins~?

There are many criteria that may have to be accounted for in limiting the disclosure of private data: 
see for example \cite{FieSla} on issues with small cell counts in sparse contingency tables, 
or see \cite{PLOS} for privacy concerns in genetic databases (see \cite[Page 5]{AAAS} for an 
introductory discussion). 
One such measure is the practical difficulty in attaining bounds for cell 
entries that can be discerned from the released margins. 
One way in which these bounds can be found is from standard methods in integer programming 
but solving the general integer program has a theoretical complexity of NP-complete 
\cite[\S 18.1]{SchBlue} and, 
practically speaking, are very challenging to solve. On the other hand, linear programs can 
be solved in polynomial time and there is significant high-powered software designed 
specifically to practically solve linear programs. So one must know if the linear 
relaxations of the integer programs associated to disclosure limitation are {\em always} good 
approximations for bounding cells in contingency tables. 

Based on theoretical results for $2$-way tables and practical experience on higher dimensional 
contingency tables, it was thought (\cite{CDKRM}, \cite{DobFie}) that the linear 
approximation was always reliable for bounding cells. 
However, using Gr{\" o}bner basis techniques, Sullivant \cite[Theorem 1]{SulLargeGaps} 
constructed a family of contingency tables on $n \geq 4$ binary random variables 
with a specified collection of margins, denoted by $\Delta_n$, such that the gap between 
the linear programming approximation 
of the cell bounds and the true integer programming cell bound for one of these margins is $2^{n-3}-1$. 
Thus, we are not entitled to {\em always} assume that the linear approximation of the bounds 
on cell entries is a faithful approximation to the true bounds on these cell entries. 

All that said, perhaps the reason for \cite{CDKRM} \& \cite{DobFie} thinking that the linear 
approximation was reliable for bounding cell entries was that, in their practical 
experience, margins with large gaps were never encountered. 
Instead, perhaps the right claim for the practitioners to make is that the 
linear relaxations of the integer programs associated to disclosure limitation are 
{\em almost always} good approximations for bounding cells in contingency tables. 
In other words, while it is the case that Sullivant's specified margins do indeed 
provide examples of the linear approximation being poor, these margins may be rarely 
encountered in practice. We argue here that this is the case for Sullivant's large gaps 
and thus reopening the question of {\em whether linear programming is an effective heuristic 
for detecting disclosures when releasing margins of multi-way tables}:

\begin{theorem} \label{thm:main}
The margins for Sullivant's $(2^{n-3}-1)$-gaps on the models $\Delta^n$ are rare. 
\end{theorem}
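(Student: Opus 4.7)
The plan is to make the notion of rarity precise via the standard pair decomposition of the initial ideal $M = \mathrm{in}_\omega(I_{\Delta^n})$, where $I_{\Delta^n}$ is the toric ideal of the hierarchical model $\Delta^n$ with design matrix $A$. Recall that each standard pair $(x^a, Z)$ of $M$ parametrizes a family $\{x^a x^b : \mathrm{supp}(b) \subseteq Z\}$ of standard monomials, and hence a family of margins $A \cdot a + \Z_{\geq 0}\{A_i : i \in Z\}$ inside the margin semigroup. The dimension of the $A$-image of $Z$ controls the growth rate of this margin family: in the $K$-th dilate of the margin cone a standard pair of image-dimension $e$ contributes $\Theta(K^e)$ distinct margins. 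A family of margins is then \emph{rare} in the sense of Theorem \ref{thm:main} if it lies in the union of the images of standard pairs of strictly submaximal dimension.

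First I would fix a term order compatible with Sullivant's Gr{\"o}bner basis calculation so that his gap-certifying binomial $x^u - x^v$ appears in the reduced Gr{\"o}bner basis of $I_{\Delta^n}$. The monomial $x^u$ then has an extremely rigid shape: Sullivant's argument forces all but a bounded number of cell entries to $0$ or $1$. Translating this rigidity into the standard pair language, any standard pair $(x^a, Z)$ whose orbit realizes the $(2^{n-3}-1)$-gap must have a free set $Z$ confined to the few flexible cells of the construction. The next step is to bound the dimension of the $A$-image of such a $Z$ by a constant independent of $n$, while showing that the dimension of the generic fiber of the toric map, which equals the maximal standard-pair image-dimension, grows unboundedly with $n$. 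The strict inequality then yields rarity in the Ehrhart-theoretic sense above.

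The main obstacle is making this matching precise: one must verify that \emph{every} margin admitting a $(2^{n-3}-1)$-gap is hit by one of these low-dimensional standard pairs, not just the margins arising from Sullivant's explicit witness. My approach is to show that Sullivant's Gr{\"o}bner binomial is the essential obstruction to closing the gap on the relevant fibers, so that any witness to the extremal gap inherits the same combinatorial rigidity and hence the same dimension bound on its free set $Z$. Once this structural uniqueness is in place, summing the contributions over the finitely many relevant standard pairs yields the desired rarity statement, with an explicit Ehrhart-polynomial comparison between gap-producing and generic margins.
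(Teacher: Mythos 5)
Your high-level strategy---measure rarity by the dimension of the $A$-image of the standard pair(s) whose orbits realize the gap, and compare against the maximal standard-pair dimension---is exactly the paper's. But there is a genuine quantitative error at the heart of your plan: you propose to ``bound the dimension of the $A$-image of such a $Z$ by a constant independent of $n$.'' This is false and the step would fail. The standard pair containing Sullivant's optimal solution ${\bf \hat{u}}_n - {\bf e}_{({\bf 0}|0|0)}$ is $((2^{n-3}-1)\cdot{\bf e}_{({\bf 0}|0|0)},\, \hat{\sigma}\setminus\{({\bf 0}|0|0)\})$, whose free set is all of $\textup{supp}({\bf \hat{f}}_n)$ minus one element, i.e.\ $2^{n-1}+1$ coordinates; because ${\bf \hat{f}}_n$ is a circuit, the corresponding columns have rank exactly $2^{n-1}+1$, so the image-dimension grows exponentially in $n$. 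The free directions are emphatically not ``confined to the few flexible cells'' of $x^u$. What makes the theorem true is only that the codimension inside the $(2^{n-1}+2^{n-2})$-dimensional cone of $\Delta^n$ is $2^{n-2}-1$, which still grows; the correct conclusion is strict submaximality, not constant-dimensionality. (Relatedly, the maximal standard-pair image-dimension is $\textup{rank}(A(\Delta_n))=2^{n-1}+2^{n-2}$, the Krull dimension of the quotient ring---not the dimension of the generic fiber of the toric map, which is $2^{n-2}$.)

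The second gap is that you give no mechanism for actually identifying and certifying the standard pair. The paper's engine is a strengthening of Sullivant's construction: ${\bf \hat{f}}_n$ is not merely a Graver/Gr{\"o}bner basis element but a \emph{circuit} of $\Delta^n$. The circuit property is what forces every integer kernel vector supported inside $\hat{\sigma}$ to be a multiple of ${\bf \hat{f}}_n$, and this is precisely what proves the pair is associated (Proposition~\ref{pro:assPair}), that it is standard (Proposition~\ref{pro:stdPair}, via explicit degree-four relations ${\bf w}^+-{\bf w}^-$ ruling out each candidate extra free direction), and that the Sullivant margin lies in no other standard pair (Proposition~\ref{pro:unique}). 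Your appeal to ``combinatorial rigidity'' inherited from the Gr{\"o}bner binomial does not supply this. Finally, your stated main obstacle---showing that \emph{every} margin admitting a $(2^{n-3}-1)$-gap is captured---is more than the theorem asserts or the paper proves: the result concerns the margins arising from Sullivant's construction, namely the image of this one standard pair, and the uniqueness statement is proved only for the witness margin $\Delta^n({\bf \hat{u}}_n - {\bf e}_{({\bf 0}|0|0)})$.
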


We will make the notion of {\em rare} precise through {\em standard pairs}. In the sections that 
follow we will first review and fortify Sullivant's construction. Next, after defining standard 
pairs and arguing that they are the right tool to use for measuring {\em rarity}, we will prove 
Theorem~\ref{thm:main} using a series of propositions regarding standard pairs specific to the 
strengthened Sullivant construction. We will see that the strengthened construction is 
not just made for its own sake but is crucial to proving Theorem~\ref{thm:main}. 
Finally, the detection of the $(2^{n-3}-1)$-gaps are made possible through Gr{\" o}bner basis theory and this 
has been used to provide other examples of margins with large gaps \cite[Corollary 4.3]{HosStu}. 
We will close with computational results showing that these other instances of large gaps are 
also rare in the same sense of Theorem~\ref{thm:main}.

\section{An Alternative Construction Of Sullivant's Large Gaps}

Let $A$ be a fixed matrix with $N$ columns and ${\bf c}$ a real cost vector with $N$ entries. 
Then for every fixed ${\bf b}$ that's a non-negative integral combination of the columns of $A$ 
(i.e. ${\bf b} \in \N A$) we have the {\em integer program} 
$
\textup{IP}_{A,{\bf c}} ({\bf b}) := 
\textup{min} \{ {\bf c} \cdot {\bf u} \, : \, 
A{\bf u} = {\bf b}, \, {\bf u} \in \N^{N} \,  \}.
$ 
The {\em linear relaxation} $\textup{LP}_{A,{\bf c}} ({\bf b})$ of $\textup{IP}_{A,{\bf c}} ({\bf b})$ 
is simply the same problem with the constraint ${\bf u} \in \N^{N}$ replaced by ${\bf u} \geq {\bf 0}$ 
and ${\bf u}$ real. 
For each fixed ${\bf b} \in \N A$ we have the quantity 
$
\textup{gap}_{A,{\bf c}}({\bf b})
:= \, 
\textup{optimal value of} \,\textup{IP}_{A,{\bf c}} ({\bf b}) 
\, - \, \textup{optimal value of} \, \textup{LP}_{A,{\bf c}} ({\bf b}) 
$ 
and the maximum of all these is the {\em integer programming gap} \cite{HosStu} 
$
\textup{gap}_{\bf c}(A) \, = \, 
\textup{max}_{{\bf b} \in \N A} \{ \textup{gap}_{A,{\bf c}}({\bf b}) \}
$

We will be especially interested in scenarios where ${\bf c} := (1,{\bf 0}) (= {\bf e}_1 \in \R^N)$ 
but even in this case \cite[\S 4]{HosStu} computing the gap precisely can be challenging. 
However, using Gr{\" o}bner basis theory, we have the following proposition which is a quick 
way to get a lower bound on the integer programming gap. 
We repeat its proof here as we need its content for later results. 
From here on we replace $\textup{gap}_{{\bf e}_1}(A)$ by simply $\textup{gap}_{-}(A)$.

\begin{proposition} \cite[Corollary 4.3]{HosStu} \label{pro:degreeGap}
Let $A$ be an integer matrix, let ${\bf e}_1$ be a cost vector and let $\succ$ be any term order. 
Suppose ${\bf g} := {\bf u} - {\bf v}$ is a reduced Gr{\" o}bner 
basis element of $A$ (with respect to the weight 
order induced by ${\bf e}_1$ and $\succ$) with ${\bf u}_1 = \alpha \geq 2$ 
Then $\textup{gap}_{-}(A) \geq \alpha - 1 $.
\end{proposition}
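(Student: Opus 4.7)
The strategy is to exhibit a single right-hand side ${\bf b}$ at which the gap is at least $\alpha - 1$. Two structural features of a reduced Gr{\" o}bner basis element will be essential: (i) ${\bf u}$ is a minimal generator of the initial ideal (being the leading term of a reduced basis element, under the convention that the weight order places ${\bf e}_1$ first), and (ii) ${\bf u}$ and ${\bf v}$ have disjoint supports, so in particular $v_1 = 0$ because $u_1 = \alpha \geq 1$.

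The candidate right-hand side is ${\bf b} := A{\bf u}'$ where ${\bf u}' := {\bf u} - {\bf e}_1$. First I would show that $\textup{IP}_{A,{\bf e}_1}({\bf b}) = \alpha - 1$. Since ${\bf u}$ is a minimal generator of the initial ideal, the proper divisor ${\bf u}'$ is a standard monomial, hence is itself the IP-optimum on its fiber, with cost $u'_1 = \alpha - 1$.

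The remaining task is to certify $\textup{LP}_{A,{\bf e}_1}({\bf b}) = 0$. Exploiting the kernel relation $A({\bf u}-{\bf v}) = {\bf 0}$, I set
\[
{\bf w} \; := \; {\bf u}' \; - \; \tfrac{\alpha - 1}{\alpha}\,({\bf u} - {\bf v}).
\]
Then $A{\bf w} = A{\bf u}' = {\bf b}$ automatically, and a direct computation gives $w_j = u_j/\alpha + \tfrac{\alpha-1}{\alpha}\,v_j \geq 0$ for every $j \geq 2$. The critical coordinate is $w_1 = \tfrac{\alpha-1}{\alpha}\,v_1$, which vanishes precisely because the disjoint-supports property forces $v_1 = 0$. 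Hence ${\bf w} \geq {\bf 0}$ is LP-feasible for ${\bf b}$ with cost $0$, and since ${\bf c} = {\bf e}_1 \geq {\bf 0}$ this cost is optimal.

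Combining the two estimates yields $\textup{gap}_{A,{\bf e}_1}({\bf b}) \geq \alpha - 1$, and hence $\textup{gap}_-(A) \geq \alpha - 1$. The only subtle step is the vanishing of $w_1$, which hinges on the disjoint-support property of \emph{reduced} Gr{\" o}bner basis elements; this is the one place where reducedness of the basis (rather than having some arbitrary binomial in the toric ideal) is indispensable for the construction to go through.
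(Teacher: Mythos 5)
Your proof is correct and follows essentially the same route as the paper's: the same right-hand side ${\bf b} = A({\bf u}-{\bf e}_1)$, the same argument that reducedness makes ${\bf u}-{\bf e}_1$ IP-optimal with cost $\alpha-1$, and the same LP certificate ${\bf w} = ({\bf u}-{\bf e}_1) - \frac{\alpha-1}{\alpha}({\bf u}-{\bf v})$ of cost zero. Your explicit coordinate check that ${\bf w}\geq {\bf 0}$ (using disjointness of the supports of ${\bf u}$ and ${\bf v}$ to get $w_1=0$) is a welcome detail that the paper's proof leaves implicit.
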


\begin{proof} (this proof due to Seth Sullivant)
By our choice of ${\bf g}$, ${\bf u}$ is a non-optimal solution for the 
integer program with constraint vector $A{\bf u}$. 
Now consider the vector ${\bf u} - {\bf e}_1$ where ${\bf e}_1$ 
is the first standard unit vector. By the reducedness of ${\bf g}$, 
${\bf u} - {\bf e}_1$ must be an optimal solution for the integer program
$
\textup{min} \{ w_1  \, : \,   
A{\bf w}  = A ({\bf u} - {\bf e}_1), \, 
{\bf w} \, \textup{integral} \, \}
$. 
On the other hand, the vector 
$
{\bf w}^* := ({\bf u} - {\bf e}_1) - 
\frac{(\alpha - 1)}{\alpha}({\bf u} - {\bf v})
$
is a solution to the linear relaxation of the integer program 
with cost equal to zero, and so it must be an optimal solution 
to the linear relaxation of the above program.  Thus, this gives 
an instance showing the gap is greater than or equal to $\alpha - 1$. 
\end{proof}

We can rephrase Proposition~\ref{pro:degreeGap} as saying that 
$\textup{gap}_{A,{\bf e}_1}({\bf b}) = \alpha - 1$ where 
${\bf b} =  A({\bf u} - {\bf e}_1)$. Note too, by the same argument, 
${\bf u} - (\alpha - \beta){\bf e}_1$ is an 
optimal integer solution for the integer program whose linear relaxation has 
an optimal solution of 
$({\bf u} - (\alpha - \beta){\bf e}_1) - \frac{\beta}{\alpha}({\bf u} - {\bf v})$. 

\begin{corollary} \label{cor:degreeGap}
With the hypothesis of Proposition~\ref{pro:degreeGap}, for every integer 
$1 \leq \beta \leq \alpha - 1$, there exists a ${\bf b}$ such that 
$\textup{gap}_{A,{\bf e}_1}({\bf b}) = \beta$.
\end{corollary}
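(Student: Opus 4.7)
The plan is to reuse the construction from the proof of Proposition~\ref{pro:degreeGap} with a single parametric adjustment: instead of subtracting one copy of ${\bf e}_1$ from ${\bf u}$, I subtract $(\alpha - \beta)$ copies. Concretely, set ${\bf b} := A({\bf u} - (\alpha - \beta){\bf e}_1)$ and aim to show that this ${\bf b}$ realizes $\textup{gap}_{A,{\bf e}_1}({\bf b}) = \beta$. The hypothesis $1 \leq \beta \leq \alpha - 1$ guarantees that ${\bf u} - (\alpha - \beta){\bf e}_1$ is a non-negative integer vector, since its first coordinate is $\beta \geq 1$ and the remaining coordinates coincide with those of ${\bf u}$.

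For the IP side, the argument is essentially the one already given in the proof of Proposition~\ref{pro:degreeGap}. Because ${\bf g} = {\bf u} - {\bf v}$ lies in a reduced Gr\"obner basis with leading term ${\bf x}^{\bf u}$, this monomial is a minimal generator of the initial ideal; the monomial ${\bf x}^{{\bf u} - (\alpha - \beta){\bf e}_1}$ is a proper divisor of ${\bf x}^{\bf u}$ (since $\alpha - \beta \geq 1$), so it is a standard monomial and ${\bf u} - (\alpha - \beta){\bf e}_1$ is therefore an optimal integer solution in its fiber, with IP optimal value $\beta$.

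For the LP side, I would verify that the vector ${\bf w}^* := ({\bf u} - (\alpha - \beta){\bf e}_1) - \frac{\beta}{\alpha}({\bf u} - {\bf v})$, already written down in the remark preceding the corollary, is LP-feasible with cost zero. The equality constraints hold because $A({\bf u} - {\bf v}) = {\bf 0}$. For non-negativity: the first coordinate is $\beta - \frac{\beta}{\alpha}\alpha = 0$ (using $v_1 = 0$, which follows from $u_1 = \alpha \geq 2$ together with the disjoint-support convention for reduced toric binomials), while for $i \geq 2$ the $i$-th coordinate equals $(1 - \frac{\beta}{\alpha}) u_i + \frac{\beta}{\alpha} v_i$, a convex combination of non-negatives. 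The cost ${\bf e}_1 \cdot {\bf w}^* = 0$ is then optimal since $w_1 \geq 0$ throughout the non-negative orthant. Combining both sides yields $\textup{gap}_{A,{\bf e}_1}({\bf b}) = \beta - 0 = \beta$.

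No step here presents a substantive obstacle: the corollary is really a bookkeeping refinement of Proposition~\ref{pro:degreeGap} that tunes the gap parameter across the full interval $[1, \alpha - 1]$. The mildest point of caution is ensuring $v_1 = 0$ so the LP cost collapses exactly to $0$, but this is immediate from the standard disjoint-support form of reduced Gr\"obner basis binomials in a toric ideal.
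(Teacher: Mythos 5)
Your proposal is correct and follows essentially the same route as the paper, which establishes the corollary in the remark preceding it by observing that ${\bf u} - (\alpha - \beta){\bf e}_1$ is an optimal integer solution while $({\bf u} - (\alpha - \beta){\bf e}_1) - \frac{\beta}{\alpha}({\bf u} - {\bf v})$ is an optimal solution of cost zero for the linear relaxation. You simply make explicit the details (standard-monomial argument for IP optimality, $v_1 = 0$ from disjoint supports, non-negativity of the convex combination) that the paper leaves to ``the same argument'' as Proposition~\ref{pro:degreeGap}.
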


We will be interested in scenarios like the figure below. The problem of bounding cell entries 
is precisely that of 
placing lower \& upper bounds on, without loss of generality, the entry $u_{000}$ given that 
all the $u_{\bf i}$'s are non-negative and integral and that they sum in a manner described 
by the margins below. The linear relaxation of this problem is approximating the bounds by 
permitting the $u_{\bf i}$'s to be real valued and bounding the entry $u_{000}$ accordingly. 
We will focus on the discrepancy between the true lower bound and its linear approximation. 
In what follows we will review how the problem of finding the minimum value of the 
$u_{\bf 0}$ cell entry given the $\Set$ margins ${\bf b}$ is equivalent to solving 
an integer program $\textup{IP}_{A(\Set),(1,{\bf 0})}({\bf b})$ with the linear 
approximation being the linear relaxation of this program. The largest possible discrepancy 
for $\Set$ is precisely the integer programming $\textup{gap}_{-}(\Set)$. 

\noindent \parbox{2.8in}{\begin{picture}(0,0)%
\includegraphics{3by4by2Entry.pstex}%
\end{picture}%
\setlength{\unitlength}{2072sp}%
\begingroup\makeatletter\ifx\SetFigFontNFSS\undefined%
\gdef\SetFigFontNFSS#1#2#3#4#5{%
  \reset@font\fontsize{#1}{#2pt}%
  \fontfamily{#3}\fontseries{#4}\fontshape{#5}%
  \selectfont}%
\fi\endgroup%
\begin{picture}(5067,4016)(2627,-4573)
\put(3556,-2626){\makebox(0,0)[lb]{\smash{{\SetFigFontNFSS{10}{12.0}{\rmdefault}{\mddefault}{\updefault}{\color[rgb]{0,0,0}$u_{000}$}%
}}}}
\put(3556,-3526){\makebox(0,0)[lb]{\smash{{\SetFigFontNFSS{10}{12.0}{\rmdefault}{\mddefault}{\updefault}{\color[rgb]{0,0,0}$u_{100}$}%
}}}}
\put(5356,-2626){\makebox(0,0)[lb]{\smash{{\SetFigFontNFSS{10}{12.0}{\rmdefault}{\mddefault}{\updefault}{\color[rgb]{0,0,0}$u_{101}$}%
}}}}
\put(5356,-1726){\makebox(0,0)[lb]{\smash{{\SetFigFontNFSS{10}{12.0}{\rmdefault}{\mddefault}{\updefault}{\color[rgb]{0,0,0}$u_{001}$}%
}}}}
\put(6706,-2176){\makebox(0,0)[lb]{\smash{{\SetFigFontNFSS{10}{12.0}{\rmdefault}{\mddefault}{\updefault}{\color[rgb]{0,0,0}$u_{011}$}%
}}}}
\put(6706,-3076){\makebox(0,0)[lb]{\smash{{\SetFigFontNFSS{10}{12.0}{\rmdefault}{\mddefault}{\updefault}{\color[rgb]{0,0,0}$u_{111}$}%
}}}}
\put(4906,-3076){\makebox(0,0)[lb]{\smash{{\SetFigFontNFSS{10}{12.0}{\rmdefault}{\mddefault}{\updefault}{\color[rgb]{0,0,0}$u_{010}$}%
}}}}
\put(4906,-3976){\makebox(0,0)[lb]{\smash{{\SetFigFontNFSS{10}{12.0}{\rmdefault}{\mddefault}{\updefault}{\color[rgb]{0,0,0}$u_{110}$}%
}}}}
\end{picture}%
}
\parbox{4in}{ 
\hspace{-.3in}
margin\{1,2\}  =  \, \, 
  \begin{tabular}{ | l | r | }
    \hline
    8 & 16 \\ \hline
    11 & 17 \\ \hline
  \end{tabular}
\vspace{.2cm}
 =  \, \, 
  \begin{tabular}{ | l | r | }
    \hline
    $b_{00+}$ & $b_{01+}$ \\ \hline
    $b_{10+}$ & $b_{11+}$ \\ \hline
  \end{tabular} 

\vspace{.5cm}

\hspace{-.3in}
margin\{1,3\}  =  \, \, 
  \begin{tabular}{ | l | r | }
    \hline
    13 & 11 \\ \hline
    16 & 12 \\ \hline
  \end{tabular}
\vspace{.2cm}
 =  \, \, 
  \begin{tabular}{ | l | r | }
    \hline
    $b_{0+0}$ & $b_{0+1}$ \\ \hline
    $b_{1+0}$ & $b_{1+1}$ \\ \hline
  \end{tabular} 

\vspace{.5cm}

\hspace{-.3in}
margin\{2,3\}  =  \, \, 
  \begin{tabular}{ | l | r | }
    \hline
    11 & 8 \\ \hline
    18 & 15 \\ \hline
  \end{tabular}
\vspace{.2cm}
 =  \, \, 
  \begin{tabular}{ | l | r | }
    \hline
    $b_{+00}$ & $b_{+01}$ \\ \hline
    $b_{+10}$ & $b_{+11}$ \\ \hline
  \end{tabular} 

}

These margins can be described formally as follows: A {\em hierarchical model} is a 
simplicial complex $\Set$ on a ground set $[n] = \{ 1,2,\ldots,n\}$ together with 
an integer vector ${\bf d} = (d_1, d_2, \ldots, d_n)$. The quantity $n$ will be the 
dimension of our multiway contingency table and the $d_i$ is the number of levels 
in the $i^{\textup{th}}$ direction of the table. Every facet $F$ of $\Set$ 
indicates a margin to be released and we can always construct a matrix $A(\Set)$ to describe 
these margins. From here on we will assume that our contingency tables are {\em binary} 
($d_1=d_2 = \cdots = d_n =2$). See \cite{HosSul} for further discussion. 

Using Proposition~\ref{pro:degreeGap}, Sullivant \cite[Theorem 8]{SulLargeGaps} showed that 
for every $n \geq 4$ there is a model $\Delta_n$ with margin ${\bf b}$ such that 
$\textup{gap}_{-}(\Delta_n) \geq 2^{n-3}-1$. Sullivant constructs a {\em Graver basis} 
element ${\bf \hat{f}}_n$ with ${\bf 0}$-th entry equal to $2^{n-3}$ for $A(\Delta_n)$ and, 
because of the algebraic interpretation of $\Delta_n$ could then claim that this Graver basis 
element is part of a reduced Gr{\" o}bner basis. 

In the remainder of this section we will show something a little stronger while simultaneously 
giving a slightly easier proof of \cite[Theorem 8]{SulLargeGaps}: the constructed element 
${\bf \hat{f}}_n$ is in fact a {\em circuit} (a minimal linear dependence) for $A(\Delta_n)$. 
While the proof is similar to that of \cite[Theorem 8]{SulLargeGaps} it has the 
added benefit of avoiding the difficult {\em primitive} condition needed for Graver basis 
elements and, at the same time, showing that Sullivant's construction is really a modified 
version of the well known {\em checkerboard vector}. More importantly, we will need the 
stronger circuit property in the next section where we prove Theorem~\ref{thm:main}. 
Let's now build up Sullivant's construction.

\begin{example}
Suppose we have a $2 \times 2 \times 2$ contingency table with released margins 
specified by the simplicial complex $B = \{ \{1,2 \},\, \{1,3 \},\, \{2,3 \} \}$ (see Figure). 
If we replace the cell entries in the 3-dimensional binary table as follows: 
$
{\bf u} = (
u_{000}, u_{001},u_{010},u_{011},u_{100},u_{101},u_{110},u_{111}
)
= 
(5,3,8,8,6,5,10,7)
$ 
we then get the values in the released margins as specified in the figure. 

Another way of saying this is as follows: 
The computation of these margins is equivalent to the computation of 
$A(B){\bf u} = (b_{00+},b_{01+},b_{10+},b_{11+},b_{0+0}, \ldots,b_{+11})$. 
where the columns of the $0/1$-matrix $A(B)$ have indices  ${\bf i}$ which are 
ordered lexicographically 
$(0,0,0)$, $(0,0,1)$, $(0,1,0)$, $(0,1,1)$, $(1,0,0)$, $(1,0,1)$, $(1,1,0)$, $(1,1,1)$. 
In turn, the 
columns of $A(B)$ are labelled ${\bf e}_{\bf i}$ in order. For each face $F$ of $B$ 
we have a row matrix with $2^{|F|}$ rows and $2^n = 2^3$ columns: one row for each 
${\bf k} \in \{0,1 \}^{|F|}$ with the ${\bf i}^{th}$ entry in that row being equal to 
$1$ (and $0$ otherwise) if and only if ${\bf i}$ restricted to $F$, ${\bf i}_F$ equals 
${\bf k}$. For example, the row matrix for $\{ 1,2 \}$ of $B$ is 
$$
A(B)|_{\{ 1,2 \}} = 
\left[
\begin{array}{cccccccc} 
1 & 1  & 0  & 0 & 0   & 0  & 0 & 0  \\
0 & 0  & 1  & 1 & 0   & 0  & 0 & 0  \\
0 & 0  & 0  & 0 & 1   & 1  & 0 & 0  \\
0 & 0  & 0  & 0 & 0   & 0  & 1 & 1
\end{array}
\right]
\begin{array}{c} 
{\bf i}_{\{ 1,2 \}} = (0,0) \\
{\bf i}_{\{ 1,2 \}}  = (0,1)\\
{\bf i}_{\{ 1,2 \}}  = (1,0)\\
{\bf i}_{\{ 1,2 \}}  = (1,1)
\end{array}
$$
This proof of $A(B)$ being the matrix that describes precisely the margins of $\Set$ 
can be seen in \cite[Eqns. (3) \& (4)]{HosSul} and holds for any (binary or otherwise) 
hierarchical model. Also, the computational package {\tt 4ti2} \cite{4ti2} can be used 
to compute the matrix of margins for any (binary or otherwise) model.
\end{example}

The binary hierarchical model in the previous example is simply the boundary of the 
$3$-simplex. The first step in Sullivant's construction is based on the model 
$B_n = \{ S \subsetneq [n-2]\}$. The following lemma is a well known folklore result but 
we prove it here for the sake of completeness:
\begin{lemma} \label{lem:checker}
The kernel of the matrix $A(B_n)$ is $1$-dimensional. Letting ${\bf e}_{\bf i}$ be the 
standard unit vector in index position ${\bf i}$, the unique basis element (up to scalar 
multiplication) for this kernel is the checkerboard vector 
$
\textup{\bf checker} = \sum_{ {\bf i}: {\bf 1} \cdot {\bf i} \, \textup{even}} {\bf e}_{\bf i}
\, - \, 
\sum_{ {\bf i}: {\bf 1} \cdot {\bf i} \, \textup{odd}} {\bf e}_{\bf i}
$
\end{lemma}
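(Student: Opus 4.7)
My plan is to split the proof into two claims: first, that $\textbf{checker}$ lies in $\ker A(B_n)$, and second, that this kernel has dimension at most one. Both steps are direct combinatorics on the Boolean hypercube, using only the description of the facet rows of $A(B_n)$.

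For the first claim, I use that the facets of $B_n$ are exactly the $(n-3)$-subsets of $[n-2]$, so each facet $F$ has a unique complementary coordinate $j_F\in[n-2]\setminus F$. The row of $A(B_n)|_F$ indexed by ${\bf k}\in\{0,1\}^{|F|}$ sums the entries $u_{\bf i}$ over the two indices ${\bf i}\in\{0,1\}^{n-2}$ with ${\bf i}_F={\bf k}$; since these two indices differ only at position $j_F$, their Hamming weights have opposite parity, so the corresponding entries of $\textbf{checker}$ (namely $+1$ and $-1$) cancel. Running this over every facet gives $A(B_n)|_F\cdot\textbf{checker}={\bf 0}$, and because rows of $A(B_n)$ for non-facet faces are sums of facet rows above them, the full product vanishes.

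For the second claim, I take any $u\in\ker A(B_n)$ and exploit the facet constraints: for every pair ${\bf i},{\bf i}'\in\{0,1\}^{n-2}$ differing in a single coordinate, $u_{\bf i}+u_{{\bf i}'}=0$. Starting from ${\bf 0}$, I can reach any ${\bf i}$ by flipping one bit at a time (using exactly ${\bf 1}\cdot{\bf i}$ flips, at the positions where $i_j=1$), and each flip multiplies the $u$-value by $-1$. This forces $u_{\bf i}=(-1)^{{\bf 1}\cdot{\bf i}}u_{\bf 0}$, so $u=u_{\bf 0}\cdot\textbf{checker}$ and the kernel is one-dimensional.

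There is no substantive obstacle; both pieces are elementary statements about how codimension-one binary marginals constrain the cell entries, which is why the paper itself calls this a folklore result. The only small subtlety worth checking is that the sign $(-1)^{{\bf 1}\cdot{\bf i}}$ does not depend on the choice of monotone hypercube path from ${\bf 0}$ to ${\bf i}$, which holds because every such path has length exactly ${\bf 1}\cdot{\bf i}$.
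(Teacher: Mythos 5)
Your proof is correct, and it differs from the paper's in one worthwhile way: it is entirely self-contained. The paper obtains the kernel dimension as an external input, citing \cite[Theorem 2.6]{HosSul} to conclude that $\dim \ker A(B_n)$ equals the number of non-faces of $B_n$ (namely one, for $[n-2]$ itself), and only then runs the sign-propagation argument to pin down what the unique dependence must look like. You instead derive everything from the single combinatorial observation that the facet rows of $A(B_n)$ impose exactly the edge constraints $u_{\bf i} + u_{{\bf i}'} = 0$ on the hypercube $\{0,1\}^{n-2}$: membership of $\textup{\bf checker}$ in the kernel follows because adjacent vertices have opposite parity, and the dimension bound follows by propagating along bit-flips from ${\bf 0}$, so no citation is needed. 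The propagation itself is the same idea as the paper's (the paper walks down from ${\bf i}={\bf 1}$ by Hamming weight, you walk up from ${\bf 0}$; this is cosmetic). Your remark that non-facet rows are sums of facet rows correctly disposes of the convention issue of whether $A(B_n)$ carries rows for all faces or only facets, and your note on path-independence of the sign is the right subtlety to flag --- though strictly one only needs a single path to determine $u_{\bf i}$, consistency being guaranteed once $\textup{\bf checker}$ is known to satisfy all the constraints. What the paper's route buys is brevity and a statement (the count of non-faces) that generalizes to other reducible models; what yours buys is independence from that machinery.
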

\begin{proof}
From \cite[Theorem 2.6]{HosSul} the dimension of the kernel of $A(B_n)$ is 
exactly the number of elements in $2^{[n-2]}$ that are not in $B_n$. There is 
only one such element, $[n-2]$ itself, hence the kernel of $A(B_n)$ has 
dimension $1$. Since every subset of the columns of size $2^{n-1}$ is a 
linearly independent set then every column of $A(B_n)$ must be used 
non-trivially in the unique dependence, say ${\bf w}$, of $A(B_n)$. 
i.e., $w_{\bf i} \neq 0$ for every index ${\bf i} \in \{ 0,1\}^{n-2}$. 
Let the last component of ${\bf w}$ be $w_{\bf 1} = 1$.

From \cite[Lemma 2.1]{HosSul} this dependence must equal zero on 
every facet of the model $B_n$ and all the facets of $B_n$ are of the form 
$[n-2] \backslash \{ j\}$ where $j \in [n-2]$. 
For each facet $S := [n-2] \backslash \{ j\}$, there is precisely only one other index 
that creates a column with a non-zero entry in its ${\bf k} = {\bf 1}$ row 
of facet $S$ and this index is ${\bf 1} - {\bf e}_j$. Furthermore, 
since each of these entries equals $1$, then 
$w_{{\bf 1} - {\bf e}_j}$ must equal $-1$ for every $j \in [n-2]$. In other words, 
the index vectors ${\bf i}$ with ${\bf 1} \cdot {\bf i} = n-3$ must take the value 
$-1$ in ${\bf w}$.
Repeating this argument on each of the ${\bf i}$ with ${\bf 1} \cdot {\bf i} = n-3$, 
we get that ${\bf w}$ must also take the values of $+1$ on each of the positions indexed 
by ${\bf i}$ with ${\bf 1} \cdot {\bf i} = n-4$. Repeating recursively we get the vector 
${\bf w} = \textup{\bf checker}$ (up to possible sign change) as claimed.
\end{proof}

The {\em checkerboard vector} is so called because of the alternating $+1/-1$'s depending 
on parity. Next consider for each $n \geq 4$ the model
$
\Gamma_n := B_n \, \cup \, \{ \{ n-1 \} \}.
$ It is not too difficult to see that its matrix of margins is 
$
A(\Gamma_n) =
\left[
\begin{array}{ccc} 
A(B_n) & \vline & A(B_n)  \\ \hline
{\bf 1} & \vline &  {\bf 0} \\
{\bf 0} & \vline &  {\bf 1}
\end{array}
\right]
$ 
where the columns are indexed by all the $({\bf i}|0)$'s first followed by the 
$({\bf i}|1)$'s. The bottom two row vectors come from the row matrix for the face 
$\{ n-1\}$ of $\Gamma_n$. Permitting an abuse of notation, we write $\Gamma^n$ 
in place of $A(\Gamma_n)$.

Next, let $\sigma$ be the following index subset 
of the columns of $\Gamma^n$
$$\sigma = \{({\bf 0} | 0),\, ({\bf 0} | 1)\} \cup 
\{ ({\bf i}| 0) : {\bf 1} \cdot {\bf i} \, \textup{odd} \} 
\cup 
\{ ({\bf i}| 1) : {\bf i} \neq {\bf 0}, {\bf 1} \cdot {\bf i} \, \textup{even} \}. 
$$ 
Then the submatrix of $\Gamma^n$ indexed in order by these columns is 
$
\Gamma^n_\sigma =
\left[
\begin{array}{cccc} 
{\bf e}_{\bf 0} & {\bf e}_{\bf 0} & \vline & A(B_n)_{\overline{\bf 0}}  \\ \hline
1 & 0  & \vline &  {\bf r} \\
0 & 1  & \vline &  {\bf 1} - {\bf r}
\end{array}
\right]
$ where ${\bf r}$ equals $1$ in its first $2^{n-3}$ entries and $0$ in the remaining 
$2^{n-3}-1$ entries, ${\bf e}_{\bf 0}$ is the first column of $A(B_n)$ and 
$A(B_n)_{\overline{\bf 0}}$ is simply $A(B_n)$ with its first column indexed by 
${\bf 0}$ removed.

\begin{lemma} \label{lem:adaptChecker}
For each $n \geq 4$ there is a unique relation on the columns of the matrix $\Gamma^n_\sigma$ given by 
$$
{\bf f}_n := {\bf u}_n - {\bf v}_n := 
2^{n-3} {\bf e}_{({\bf 0} | 0)} + 
\sum_{{\bf i}: {\bf i} \neq {\bf 0}, {\bf 1} \cdot {\bf i} \, \textup{even}} 
{{\bf e}_{({\bf i}| 1)}}
\, - \, 
(2^{n-3} - 1) {\bf e}_{({\bf 0}| 1)} - 
\sum_{{\bf i}: {\bf 1} \cdot {\bf i} \, \textup{odd}} 
{{\bf e}_{({\bf i}| 0)}}.
$$ 
\end{lemma}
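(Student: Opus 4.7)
The plan is to prove the lemma in two steps: first verify that ${\bf f}_n \in \ker(\Gamma^n_\sigma)$ directly using Lemma~\ref{lem:checker}, and then establish uniqueness by a dimension count.

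For the verification, I would split $\Gamma^n_\sigma {\bf f}_n$ according to the two row blocks. In the top block the two columns indexed by $({\bf 0}|0)$ and $({\bf 0}|1)$ both restrict to ${\bf e}_{\bf 0}$ of $A(B_n)$; collapsing their coefficients in ${\bf f}_n$ produces net $2^{n-3} - (2^{n-3}-1) = 1$ on ${\bf e}_{\bf 0}$, while the remaining coefficients on ${\bf e}_{\bf i}$ for ${\bf i} \neq {\bf 0}$ reproduce $\textup{\bf checker}$ by construction, so the top contribution vanishes by Lemma~\ref{lem:checker}. For the bottom two rows the verification is even more direct: using that $\{0,1\}^{n-2}$ has exactly $2^{n-3}$ indices of each parity, the penultimate row sums to $2^{n-3} - 2^{n-3} = 0$ and the last row to $-(2^{n-3}-1) + (2^{n-3}-1) = 0$.

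For the uniqueness step, I would count dimensions. The matrix $\Gamma^n_\sigma$ has $2 + 2^{n-3} + (2^{n-3}-1) = 2^{n-2} + 1$ columns, and its top block has rank $2^{n-2} - 1$ (the rank of $A(B_n)$, by Lemma~\ref{lem:checker}). The top kernel is therefore two-dimensional, spanned by ${\bf g}_1 := {\bf e}_{({\bf 0}|0)} - {\bf e}_{({\bf 0}|1)}$ (from the duplication of ${\bf e}_{\bf 0}$) and the unique $\sigma$-lift ${\bf g}_2$ of $\textup{\bf checker}$ that places its $+1$ coefficient on $({\bf 0}|0)$. Evaluating the bottom two rows on this basis gives a $2\times 2$ matrix whose columns are $(1,-1)^T$ and $(1-2^{n-3},\,2^{n-3}-1)^T$; the second column equals $(1-2^{n-3})$ times the first, so the matrix has rank $1$, forcing $\dim\ker(\Gamma^n_\sigma) = 2 - 1 = 1$. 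The generator $(2^{n-3}-1){\bf g}_1 + {\bf g}_2$ of that kernel then expands explicitly to ${\bf f}_n$.

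The main obstacle is the bookkeeping of parities on $\{0,1\}^{n-2}$. The identity $\#\{{\bf i}:{\bf 1}\cdot{\bf i}\textup{ odd}\} = 2^{n-3} = \#\{{\bf i}\neq{\bf 0}:{\bf 1}\cdot{\bf i}\textup{ even}\} + 1$ drives both the bottom-row cancellations for ${\bf f}_n$ and the rank-$1$ collapse of the $2\times 2$ block, and it is precisely where the hypothesis $n \geq 4$ enters: for $n = 3$ the even-parity sum in ${\bf f}_n$ would be empty and the whole statement degenerates.
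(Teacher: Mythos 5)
Your proof is correct, and it follows the same basic strategy as the paper's — reduce to Lemma~\ref{lem:checker} plus the two extra rows, and conclude via a dimension count that the kernel of $\Gamma^n_\sigma$ is one-dimensional — but your execution is more explicit and in one respect more complete. The paper simply \emph{asserts} that $\textup{rank}(\Gamma^n_\sigma) = \textup{rank}(A(B_n)) + 1$ and deduces $\dim\ker(\Gamma^n_\sigma) = 1$ from the column count; your argument actually proves that assertion, since exhibiting the two-dimensional top-block kernel $\textup{span}\{{\bf g}_1, {\bf g}_2\}$ and checking that the bottom two rows act on it by a rank-one $2\times 2$ matrix (columns $(1,-1)^T$ and $(1-2^{n-3},\,2^{n-3}-1)^T$, the second a multiple of the first) is exactly the content of ``the bottom rows add only one to the rank.'' Your parity bookkeeping is right: $\{0,1\}^{n-2}$ has $2^{n-3}$ odd-weight indices and $2^{n-3}-1$ nonzero even-weight ones, which gives both the vanishing of the bottom rows on ${\bf f}_n$ and the proportionality of the two columns above. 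Solving $x(1,-1)^T + y(1-2^{n-3},\,2^{n-3}-1)^T = {\bf 0}$ to get the generator $(2^{n-3}-1){\bf g}_1 + {\bf g}_2 = {\bf f}_n$ recovers the coefficients $2^{n-3}$ and $2^{n-3}-1$ that the paper obtains by saying the last two rows ``force'' them; the two derivations are equivalent, but yours leaves nothing implicit. The direct verification that ${\bf f}_n \in \ker(\Gamma^n_\sigma)$ is not strictly needed once uniqueness of the kernel generator is established, but it is a harmless and reassuring check.
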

\begin{proof}
We first observe that $\textup{rank}(\Gamma^n_\sigma) = \textup{rank}(A(B_n)) + 1$. 
In addition, $\Gamma^n_\sigma$ has only one column more than $A(B_n)$ and so the kernel of 
$\Gamma^n_\sigma$ has dimension equal to $1$. The unique relation ${\bf f}_n$ 
(up to scalar multiplication) must also respect those relations on $A(B_n)$ and so 
$\Gamma^n_\sigma$ must respect the checkerboard relation of Lemma~\ref{lem:checker}. 
But then there are also the last two rows of $\Gamma^n_\sigma$ to account for, 
forcing the coefficients of ${\bf e}_{({\bf 0} | 0)}$ to be $2^{n-3}$ and of 
${\bf e}_{({\bf 0} | 1)}$ to be $2^{n-3}-1$ as claimed.
\end{proof}
\begin{corollary} \label{cor:circuit}
The vector ${\bf f}_n$ is a circuit (i.e. a minimal linear dependence) for the matrix $\Gamma^n$.
\end{corollary}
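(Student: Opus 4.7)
The plan is to lean almost entirely on Lemma~\ref{lem:adaptChecker}. Recall that a circuit of $\Gamma^n$ is a nonzero integer vector ${\bf w} \in \ker(\Gamma^n)$ whose support is minimal under inclusion among nonzero kernel vectors, and whose nonzero entries are coprime. So I have to verify three things for ${\bf f}_n$: (i) that ${\bf f}_n \in \ker(\Gamma^n)$; (ii) that no nonzero element of $\ker(\Gamma^n)$ has support strictly contained in $\textup{supp}({\bf f}_n) = \sigma$; and (iii) that the nonzero entries of ${\bf f}_n$ have greatest common divisor equal to $1$.

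Item (i) is immediate: Lemma~\ref{lem:adaptChecker} gives $\Gamma^n_\sigma \cdot {\bf f}_n|_\sigma = {\bf 0}$, and since ${\bf f}_n$ is supported on $\sigma$, padding it with zeros on the complementary columns of $\Gamma^n$ yields an element of $\ker(\Gamma^n)$. Item (iii) is immediate too, since the coefficients on the columns ${\bf e}_{({\bf i}|1)}$ (for nonzero ${\bf i}$ with ${\bf 1}\cdot{\bf i}$ even) and ${\bf e}_{({\bf i}|0)}$ (for ${\bf 1}\cdot{\bf i}$ odd) are all $\pm 1$, so $\gcd = 1$.

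The only real step is (ii). Suppose ${\bf w} \in \ker(\Gamma^n)$ is nonzero with $\textup{supp}({\bf w}) \subseteq \sigma$. Then ${\bf w}|_\sigma$ is a nonzero element of $\ker(\Gamma^n_\sigma)$. But Lemma~\ref{lem:adaptChecker} tells us that this kernel is one-dimensional and spanned by ${\bf f}_n$. Hence ${\bf w}|_\sigma$ is a nonzero scalar multiple of ${\bf f}_n$, and in particular $\textup{supp}({\bf w}) = \sigma$, contradicting strict containment. So $\sigma$ is inclusion-minimal and ${\bf f}_n$ is a circuit.

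There is no real obstacle here; the work has already been absorbed into Lemma~\ref{lem:adaptChecker}. The substantive point being made by the corollary is that the Lemma~\ref{lem:adaptChecker} was not merely an identification of ${\em some}$ kernel element, but in fact of the whole kernel of $\Gamma^n_\sigma$, which is precisely what is needed to upgrade the statement ``${\bf f}_n$ is a minimal-support relation inside $\Gamma^n_\sigma$'' to ``${\bf f}_n$ is a minimal-support relation inside the full matrix $\Gamma^n$.''
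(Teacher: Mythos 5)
Your proof is correct and follows exactly the route the paper intends: the corollary is stated without proof precisely because it is immediate from Lemma~\ref{lem:adaptChecker}, whose content is that $\ker(\Gamma^n_\sigma)$ is one-dimensional and spanned by ${\bf f}_n$, which has full support on $\sigma$, so no nonzero kernel vector of $\Gamma^n$ can have support strictly inside $\sigma$. Your explicit verification of the coprimality of the entries and of membership in $\ker(\Gamma^n)$ is a harmless (and correct) elaboration of the same argument.
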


To complete Sullivant's construction, the {\em logit model} of $\Gamma_n$ is 
given by 
$
\Delta_n := \textup{logit}(\Gamma_n) = 
\{ T \cup \{ n \} \, : \, T \in \Gamma_n \} \, \cup \, 
2^{[n-1]} 
$.
The matrix $\Delta^{n}$ for the model $\Delta_n$ is the {\em Lawrence lifting} 
of $\Gamma^{n}$ \cite{SanStu} and equals 
$
\Delta^n = \left[
\begin{array}{ccc}
\Gamma^n & \vline & 0 \\
0 & \vline & \Gamma^n \\
I & \vline & I
\end{array}
\right] 
$
where $I$ is the $2^{n-1}$ identity matrix. Since $\Delta^n$ is the Lawrence lifting 
of $\Gamma^n$ then \cite[Chapter 7]{StuGBCP} we have the following 
property: ${\bf g}$ is an (integer) vector in the kernel of $\Gamma^n$ with 
${\bf g} \in \R^{2^{n-1}}$ if and only if the lifted 
${\bf \hat{g}} := ({\bf g},-{\bf g}) \in \R^{2^n}$ is in the (integer) kernel of $\Delta^n$.
Consequently, if we have a vector ${\bf g} \in \R^{2^{n-1}}$ in the kernel of $\Gamma^n$ and if 
the support of ${\bf g}$ equals $\tau \subseteq [2^{n-1}]$ then we denote the support of 
${\bf \hat{g}}$ by $\hat{\tau} \subseteq [2^n]$ and we can also describe $\hat{\tau}$ 
as follows: if $\eta \in \{ 0,1 \}^{n-1}$ then 
$
\eta \in \tau \Longleftrightarrow \, \textup{both} \, (\eta|0), \, (\eta|1) \in \hat{\tau}.
$ 
Because of the isomorphism ${\bf g} \longleftrightarrow ({\bf g},-{\bf g})$ between the 
kernels of matrices for any model and its logit, we have the first part of the following 
corollary. The second part follows from the first.
\begin{corollary} \textup{(1)} The lifted ${\bf \hat{f}}_n$ is a circuit for $\Delta^n$. 
\textup{(2) (from \cite[Prop. 4.11 \& Thm. 7.1]{StuGBCP})} The lifted ${\bf \hat{f}}_n$ is a reduced 
Gr{\" o}bner basis element for $\Delta^n$.
\end{corollary}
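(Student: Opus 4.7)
The plan is to derive both parts from the explicit bijection between $\ker \Gamma^n$ and $\ker \Delta^n$ that the Lawrence lifting provides, together with the general theory of Lawrence liftings cited in the statement.

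For part (1), I would argue as follows. The isomorphism ${\bf g} \longleftrightarrow ({\bf g},-{\bf g})$ from $\ker \Gamma^n$ to $\ker \Delta^n$ is a linear bijection, and the support description recalled just before the corollary shows that if ${\bf g}$ has support $\tau \subseteq [2^{n-1}]$, then $\hat{\bf g}$ has support $\hat{\tau} \subseteq [2^n]$ with $|\hat{\tau}| = 2|\tau|$. In particular the correspondence is inclusion-preserving on supports: $\tau' \subseteq \tau$ if and only if $\hat{\tau'} \subseteq \hat{\tau}$. Hence minimality of support is preserved in both directions. Since ${\bf f}_n$ is a circuit for $\Gamma^n$ by Corollary~\ref{cor:circuit}, any kernel element of $\Delta^n$ whose support is contained in the support of $\hat{\bf f}_n$ would have to arise (via the same bijection, or an argument that any such vector decomposes through the Lawrence structure) from a kernel element of $\Gamma^n$ whose support is contained in the support of ${\bf f}_n$; minimality of ${\bf f}_n$ forces it to be a scalar multiple of ${\bf f}_n$, which lifts to a scalar multiple of $\hat{\bf f}_n$. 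Therefore $\hat{\bf f}_n$ is a circuit of $\Delta^n$.

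For part (2), I would simply invoke the two cited results from Sturmfels' book. Proposition~4.11 of \cite{StuGBCP} states that every circuit of a Lawrence lifting is \emph{primitive}, hence lies in the Graver basis of $\Delta^n$. Theorem~7.1 of \cite{StuGBCP} states that for a Lawrence lifting, the Graver basis, the universal Gr\"obner basis, and the reduced Gr\"obner basis with respect to any term order all coincide (up to sign). Combining these two statements with part (1), $\hat{\bf f}_n$ is a circuit, therefore primitive, therefore in the Graver basis, therefore an element of the reduced Gr\"obner basis of $\Delta^n$ for the term order induced by the cost vector ${\bf e}_1$ and any tie-breaking $\succ$ we wish.

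The main obstacle is the direction of the circuit argument in part (1) that asserts every small-support kernel vector of $\Delta^n$ comes from one of $\Gamma^n$. This requires the observation that the projection $({\bf x},{\bf y}) \mapsto {\bf x} - {\bf y}$ (equivalently, reading off the top block of $\Delta^n$) sends $\ker \Delta^n$ onto $\ker \Gamma^n$ and that, by the block structure $\bigl[I \, | \, I \bigr]$ at the bottom of $\Delta^n$, every element of $\ker \Delta^n$ has the form $({\bf g},-{\bf g})$; once this is in hand, the support statement forces the correspondence to preserve circuits. Everything else is a bookkeeping exercise and a direct appeal to the cited theorems.
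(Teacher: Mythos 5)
Your argument is correct and follows essentially the same route as the paper: part (1) via the Lawrence-lifting isomorphism ${\bf g} \longleftrightarrow ({\bf g},-{\bf g})$ between $\ker \Gamma^n$ and $\ker \Delta^n$ (which preserves support inclusion and hence circuits), and part (2) by combining the chain circuits $\subseteq$ universal Gr\"obner basis $\subseteq$ Graver basis with the coincidence of all these sets for Lawrence-type matrices from \cite{StuGBCP}; you actually supply more detail than the paper does. The only quibble is attribution: Proposition~4.11 of \cite{StuGBCP} is the general containment of circuits in the universal Gr\"obner basis (for any matrix, not specifically Lawrence liftings), while Theorem~7.1 is what is special to the Lawrence case.
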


The lifted ${\bf \hat{f}}_n$'s are precisely what Sullivant constructed and argued that 
these were reduced Gr{\" o}bner basis elements for $\Delta^n$ by showing that they were 
{\em Graver basis} elements, which is a weaker condition than (1) above but still 
sufficient to be able to apply condition (2). Consequently, applying 
Proposition~\ref{pro:degreeGap} to ${\bf \hat{f}}_n$ we get that 
$\textup{gap}_{-}(\Delta_n) \geq 2^{n-3}-1$ and that this gap is given by the margin 
${\bf b} := \Delta^n({\bf \hat{u}}_n - {\bf e}_{\bf 0})$. In addition, from this margin 
${\bf b}$, other margins that provide the same size gap are easy to construct. 
In the next section, via standard pairs, we show that for each $n \geq 4$ all these 
margins are very rare.

\section{Rare Encounters With Large Gaps Via Standard Pairs}

In order to examine the frequency with which the $(2^{n-3}-1)$-gap occurs 
we need the notion of {\em standard pairs} \cite{StuTruVog}.
Let $A$ be a fixed matrix with $N$ columns and ${\bf c}$ a cost vector 
with $N$ entries. If $\gamma \in \N^N$ and $\tau \subseteq [N]$, we denote 
by $(\gamma, \tau)$ the set of vectors 
$\{ \gamma +  \sum_{l \in \tau} n_l {\bf e}_l \, : \, \, n_l \in \N \}.$ 
We call $\gamma$ the {\em root} of the pair and $\tau$ the {\em free directions} 
of the pair. We say that the pair is {\em associated} for the family of integer programs 
$\textup{IP}_{A,{\bf c}} := \{ \textup{IP}_{A,{\bf c}}({\bf b}) \, : \, {\bf b} \in \N A\}$ 
if both 
(i) $\textup{supp}(\gamma) \cap \tau = \emptyset$ and 
(ii) every vector ${\bf p} \in (\gamma, \tau)$ 
is an optimal solution for $\textup{IP}_{A,{\bf c}}(A{\bf p})$.
Furthermore, if there does not exist another associated pair 
$(\gamma^\prime, \tau^\prime)$ with 
$(\gamma, \tau) \subsetneq  (\gamma^\prime, \tau^\prime)$ then we say that 
$(\gamma, \tau)$ is a {\em standard pair} for the family of integer programs. 

We will now prove Theorem~\ref{thm:main} relying on the verification of propositions that 
will follow. As in the previous section, $\hat{\sigma}$ denotes the support of ${\bf \hat{f}}_n$, 
the Gr{\" o}bner basis element of $\Delta^n$ which we showed was also a circuit of $\Delta^n$. 
As before, let the indices of the columns of $\Delta^n$ be denoted by $({\bf i}|l|l^\prime)$ 
where ${\bf i} \in \{ 0,1\}^{n-2}$ and $l, l^\prime \in \{ 0,1\}$. 
Finally, let $M(q)$ denote the set of margins 
$\{ {\bf b} : \Delta^n {\bf x} = {\bf b} \, \textup{and} \, {\bf 1}\cdot{\bf x} = q \}$. Note 
that $\Delta^n$ is {\em graded} i.e. the entries in each column of $\Delta^n$ sum to $n$ and 
so every margin ${\bf b}$ belongs to a unique $M(q)$. 

\vspace{.3cm}

\noindent {\bf Proof of Theorem~\ref{thm:main}}: 
From Proposition~\ref{pro:degreeGap}, Sullivant's $(2^{n-3} -1)$-gap was created by the 
margin ${\bf b} = \Delta^n({\bf \hat{u}}_n - {\bf e}_{({\bf 0}|0|0)})$. In the remainder of this 
section we will show the following: 

{\bf Claim:} {\em The margin ${\bf b} = \Delta^n({\bf \hat{u}}_n - {\bf e}_{({\bf 0}|0|0)})$ belongs to 
the image (under $\Delta^n$) of the standard pair 
$((2^{n-3} -1) \cdot {\bf e}_{({\bf 0}|0|0)}, \hat{\sigma} \backslash \{ ({\bf 0}|0|0) \})$ 
and to no other standard pair.}

Note too that most elements from the standard pair 
$( (2^{n-3} - 1) \cdot {\bf e}_{({\bf 0}|0|0)}, \hat{\sigma} \backslash \{ ({\bf 0}|0|0) \})$ 
are (coordinate-wise) greater than ${\bf \hat{u}}_n - {\bf e}_{({\bf 0} | 0 |0)}$ and it follows 
easily that these elements also create gaps of size $2^{n-3} -1$. We will call the non-negative 
integer image under $\Delta^n$ of this standard pair the {\em Sullivant $(2^{n-3} -1)$-margins}.

These margins are rare in the following sense. By the gradedness of $\Delta^n$ each $M(q)$ 
is contained in the lattice points of a slice of the cone $\textup{cone}(\Delta^n)$. This cone has 
dimension $2^{n-1} + 2^{n-2}$ \cite[Theorem 2.6]{HosSul} and so each margin slice $M(q)$ 
of this cone is a collection of lattice points in a polytope of dimension $2^{n-1} + 2^{n-2} -1$. 
On the other hand, the Sullivant $(2^{n-3} -1)$-margins all live in the shifted cone 
$
\Delta^n({\bf u}_n - {\bf e}_{({\bf 0}|0|0)}) \, + \, 
\textup{cone}(\Delta^n_{\hat{\sigma} \backslash \{ ({\bf 0}|0|0)\}})
$ 
which has dimension $|\hat{\sigma}| - 1 = 2(2^{n-2}+1) - 1 = 2^{n-1}+1$. Consequently 
for each $q$, the $(2^{n-3}-1)$-margins sit in a $2^{n-1}$-dimensional slice of the 
$(2^{n-1}+2^{n-2}-1)$-dimensional $M(q)$. Hence, the 
$(2^{n-3}-1)$-margins sit in a relatively very small slice of $M(q)$ for every $q$ 
and would thus, in a random uniform choice of margin from $M(q)$, be rarely encountered. 
 \hfill $\square$

Before proving the central claim of the above proof there are a number of things to 
note from the above analysis of the $(2^{n-3}-1)$-margins. 
A reasonable alternative approach to measuring the frequency of these $(2^{n-3}-1)$-margins 
would be to ignore the standard pair analysis and instead ask how frequently these 
margins occur asymptotically. i.e. among all the margins with large $1$-norms. In this 
case, regardless of the model $\Set$, most gaps are $0$. This makes reasonable sense 
when phrased as {\em the linear relaxation of an integer program has an integer solution 
for most right hand sides ${\bf b}$ when $|{\bf b}| \gg 0$}. A clean algebraic statement in terms of 
the Hilbert function of toric ideals can be seen in \cite[Prop. 12.16]{StuGBCP}. 
However, given that released margins with large norms will come from tables 
with large cell counts (which are harder to bound tightly and are consequently more secure), 
the asymptotic results are not relevant thus justifying the need for the analysis above.

Also note that since the cone is shifted significantly from the origin then 
the $(2^{n-3}-1)$-margins only start to appear in $M(q)$ slices for $q \geq 2^{n-3} -1$ 
and so on contingency tables with mostly small counts, instances of these large gaps 
will never be encountered. Contingency tables with small cell counts are common in 
practice (see, for example, \cite{CDKRM, FieSla}) which could give a further explanation 
as to why the large gaps are not encountered in practice. 

Finally, we noted that a {\em random uniform choice} of 
margin in $M(q)$ is highly unlikely to pick out a $(2^{n-3}-1)$-margin but there 
may be some prior distribution on the margins that is not uniform. A very reasonable 
assumption, based on sums being distributed normally, is that the margins that are 
most frequently encountered are those in the {\em centre} of $\textup{cone}(\Delta^n)$ 
but in our instance the $(2^{n-3}-1)$-margins appear in a shifted cone, shifted in a 
highly skewed fashion away from the centre of $\textup{cone}(\Delta^n)$ along 
one of the extreme rays of that cone. So in fact, the uniform assumption in 
the proof of Theorem~\ref{thm:main} may even be overly generous to the occurrence 
of the $(2^{n-3}-1)$-margins. 

We now turn our attention to proving the main claim in the proof of 
Theorem~\ref{thm:main}. We first need the following remark:
\begin{remark} \label{rem:circuit}
Given any matrix $A$ and a circuit ${\bf w}$ of $A$, every integer vector in 
the kernel of $A$ with support contained in the support of ${\bf w}$ must be 
an integer multiple of ${\bf w}$. In particular, if $w_1 \geq 1$ then 
${\bf w} - {\bf e}_1$ cannot be in the kernel of the matrix $A$.
\end{remark}

For our interests, where ${\bf c} := {\bf e}_{\bf 0}$ and 
$A = \Delta^n$, we can rename the family of integer programs as  
$\textup{IP}_{\Delta^n,-}$ and the optimality condition (ii) in the 
associated pairs as follows: 
\begin{center}
$(\textup{ii})_{-}$: $\nexists$ both $\{n_l \in \N: l \in \tau\}$ and ${\bf t} \in \N^{2^n}$ 
such that $t_{({\bf 0}|0|0)} < p_{({\bf 0}|0|0)}$ and 
$
\Delta^n {\bf p} := \Delta^n (\gamma +  \sum_{l \in \tau} n_l {\bf e}_l) = \Delta^n {\bf t}
$
\end{center}

\begin{proposition} \label{pro:assPair}
The pair $(k \cdot e({\bf 0}|0|0), \hat{\sigma} \backslash \{ ({\bf 0}|0|0) \})$ is an 
associated pair for $\textup{IP}_{\Delta^n, -}$ for all $1 \leq k \leq 2^{n-3}-1$.
\end{proposition}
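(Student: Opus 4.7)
The plan is to verify the two conditions (i) and (ii) for an associated pair. Condition (i), $\textup{supp}(\gamma) \cap \tau = \emptyset$, is immediate since $\textup{supp}(k\cdot {\bf e}_{({\bf 0}|0|0)}) = \{({\bf 0}|0|0)\}$ and by construction $\tau = \hat{\sigma} \setminus \{({\bf 0}|0|0)\}$. So the substance of the argument is condition $(\textup{ii})_{-}$.

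For condition $(\textup{ii})_{-}$, I would argue by contradiction. Fix ${\bf p} = k\cdot {\bf e}_{({\bf 0}|0|0)} + \sum_{l \in \tau} n_l {\bf e}_l$, and suppose there exists ${\bf t} \in \N^{2^n}$ with $t_{({\bf 0}|0|0)} < k$ and $\Delta^n{\bf t} = \Delta^n {\bf p}$. Set ${\bf q} := {\bf p} - {\bf t} \in \ker \Delta^n$, so $q_{({\bf 0}|0|0)} \geq 1$. Because $\Delta^n$ is a Lawrence lifting, the discussion preceding the statement gives ${\bf q} = ({\bf g}, -{\bf g})$ for some ${\bf g} \in \ker \Gamma^n$ (thinking of the first half of the vector as the $l'=0$ block and the second half as the $l'=1$ block), and hence $q_{({\bf 0}|0|0)} = g_{({\bf 0}|0)}$.

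The key step is to show that ${\bf g}$ must be supported on $\sigma$, so that the circuit property forces ${\bf g}$ to be an integer multiple of ${\bf f}_n$. Write ${\bf q}^+ := \max({\bf q},0)$ and ${\bf q}^- := \max(-{\bf q},0)$; since ${\bf t} \geq {\bf 0}$, one has ${\bf q}^+ \leq {\bf p}$ coordinate-wise. Because ${\bf p}$ itself is supported in $\hat{\sigma}$, so is ${\bf q}^+$. Restricting to the two halves separately, the positive part $g^+$ of ${\bf g}$ sits in the first-half support of $\hat{\sigma}$ and the positive part of $-{\bf g}$ (that is, $g^-$) sits in the second-half support of $\hat{\sigma}$. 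By the characterisation of $\hat{\sigma}$ given before the circuit corollary ($\eta \in \sigma \iff (\eta|0),(\eta|1) \in \hat{\sigma}$), both halves of $\hat{\sigma}$ coincide with $\sigma$. Therefore $\textup{supp}({\bf g}) = \textup{supp}(g^+) \cup \textup{supp}(g^-) \subseteq \sigma$.

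Now apply Remark~\ref{rem:circuit} together with Lemma~\ref{lem:adaptChecker}: any integer vector in $\ker \Gamma^n$ supported in $\sigma$ is an integer multiple of the circuit ${\bf f}_n$, so ${\bf g} = c\cdot {\bf f}_n$ with $c \in \Z$. Since $g_{({\bf 0}|0)} = q_{({\bf 0}|0|0)} \geq 1$ and the $({\bf 0}|0)$-entry of ${\bf f}_n$ equals $2^{n-3}$, we obtain $c \geq 1$ and therefore $g_{({\bf 0}|0)} \geq 2^{n-3}$. But $q^+_{({\bf 0}|0|0)} \leq p_{({\bf 0}|0|0)} = k \leq 2^{n-3}-1$, a contradiction. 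The main obstacle, and where some care is needed, is the bookkeeping in the previous paragraph that identifies the supports of $g^+$ and $g^-$ inside $\sigma$ via the Lawrence-lifting correspondence; once this is set up, the circuit lemma closes the argument cleanly.
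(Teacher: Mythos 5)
Your proof is correct and follows essentially the same route as the paper: both arguments use the Lawrence-lifting structure of $\Delta^n$ to force the support of the kernel element ${\bf p}-{\bf t}$ into $\hat{\sigma}$ and then invoke the circuit property together with Remark~\ref{rem:circuit} to get a contradiction from the $({\bf 0}|0|0)$-entry lying strictly between $0$ and $2^{n-3}$. The only cosmetic differences are that you descend to $\ker\Gamma^n$ via ${\bf q}=({\bf g},-{\bf g})$ and handle all $1\leq k\leq 2^{n-3}-1$ uniformly, whereas the paper stays with $\hat{\bf f}_n$ in $\ker\Delta^n$, proves the case $k=2^{n-3}-1$ first, and deduces the remaining $k$ from monotonicity of optimal solutions.
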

\begin{proof}
Clearly each pair satisfies condition (i) above so all we need verify is the rephrased 
condition $(\textup{ii})_{-}$. We will first show that condition 
$(\textup{ii})_{-}$ holds for $k = 2^{n-3}-1$. 

Let ${\bf p} = (2^{n-3}-1) \cdot {\bf e}_{({\bf 0}|0|0)} + \sum_{l \in \hat{\sigma}} n_l {\bf e}_l$. 
If there were to exist a ${\bf t}$ such that $\Delta^n {\bf p} = \Delta^n {\bf t}$ with 
$t_{({\bf 0}|0|0)} < 2^{n-3}-1$ then ${\bf p} - {\bf t}$ would be an integer vector 
in the kernel of $\Delta^n$. 
Since both ${\bf p}$ and ${\bf t}$ are non-negative, we can assume that their respective supports 
are disjoint. Since ${\bf p}$ is the positive part of the kernel element then an index element 
$(\eta|0)$ (or $(\eta|1)$) is in the support of ${\bf p}$ if and only if $(\eta|1)$ 
(or $(\eta|0)$ respectively) is in the support of ${\bf t}$. Therefore, by the construction of 
${\bf \hat{f}}_n$ and since $\textup{supp}({\bf p}) \subseteq \hat{\sigma}$ we must have 
$\textup{supp}({\bf t}) \subseteq \hat{\sigma}$. 

But this cannot occur: if the set $\textup{supp}({\bf t})$ were contained in $\hat{\sigma}$ 
then $\textup{supp}({\bf p} - {\bf t})$ would also be contained in $\hat{\sigma}$ 
with the $({\bf 0}|0|0)$-entry of this integer vector being positive and 
less than $2^{n-3}-1$, which using the fact that ${\bf \hat{f}}_n$ forms a circuit, 
would contradict Remark~\ref{rem:circuit}. Hence $(\textup{ii})_{-}$ is satisfied and so 
$( (2^{n-3}-1) \cdot e({\bf 0}|0|0), \hat{\sigma} \backslash \{ ({\bf 0}|0|0) \})$ is an 
associated pair. 

For the other values of $k$, we know that for any family of integer programs, 
if we have a vector ${\bf p}$ that is an optimal solution and another non-negative integral 
vector ${\bf p}^\prime$ with ${\bf p}^\prime \leq {\bf p}$ then ${\bf p}^\prime$ is also an 
optimal solution for that family. This proves that we have an associated pair for the other 
values of $k$ too. 
\end{proof}

Note that nothing special was used here about the matrix $\Delta^n$, only that it was 
the matrix of margins for a logit model, so we have the following general result for 
identifying quick gaps for other logit models:
\begin{corollary}
If $\hat{\bf f}$ be a circuit for any model $\textup{logit}(\Set)$ with 
$\textup{f}_{\bf 0} = \alpha$ and $\hat{\sigma} = \textup{supp}(\hat{\bf f})$. 
Then $(k \cdot {\bf e}_{\bf 0}, \hat{\sigma} \backslash \{ {\bf 0} \})$ is an 
associated pair for $\textup{IP}_{\textup{logit}(\Set),-}$ for all $1 \leq k \leq \alpha - 1$.
\end{corollary}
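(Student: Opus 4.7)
My plan is to reproduce, nearly verbatim, the argument of Proposition~\ref{pro:assPair}, since as the author already remarks, nothing specific to $\Delta^n$ was used beyond its being the matrix of margins of a logit model. The two ingredients I need are the antisymmetric characterization of $\ker A(\textup{logit}(\Set))$ (from the isomorphism ${\bf g}\leftrightarrow ({\bf g},-{\bf g})$ stated just before Corollary~2.5) and the circuit property of $\hat{\bf f}$ together with Remark~\ref{rem:circuit}.

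First, condition (i) for associated pairs is automatic, since $\textup{supp}(k\cdot {\bf e}_{\bf 0})=\{{\bf 0}\}$ is disjoint from $\hat{\sigma}\setminus\{{\bf 0}\}$; so I would turn immediately to condition $(\textup{ii})_-$ and establish it first at the extreme value $k=\alpha-1$. I take a generic pair element ${\bf p}=(\alpha-1){\bf e}_{\bf 0}+\sum_{l\in\hat{\sigma}\setminus\{{\bf 0}\}} n_l{\bf e}_l$ and suppose, for contradiction, that some non-negative integer vector ${\bf t}$ satisfies $A(\textup{logit}(\Set)){\bf p}=A(\textup{logit}(\Set)){\bf t}$ with $t_{\bf 0}<\alpha-1$. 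After cancelling common entries I may assume $\textup{supp}({\bf p})\cap\textup{supp}({\bf t})=\emptyset$.

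The crucial step is to force $\textup{supp}({\bf t})\subseteq\hat{\sigma}$. Because logit kernels are antisymmetric, the $(\eta|0)$-entry of ${\bf p}-{\bf t}$ equals the negative of its $(\eta|1)$-entry for every $\eta$. Since $\textup{supp}({\bf p})\subseteq\hat{\sigma}$ and $\hat{\sigma}$ itself carries the same $\{(\eta|0),(\eta|1)\}$-pairing (being the support of a circuit of a logit matrix), disjointness of supports forces each index of $\textup{supp}({\bf t})$ to pair into $\hat{\sigma}$. Therefore $\textup{supp}({\bf p}-{\bf t})\subseteq\hat{\sigma}$, and Remark~\ref{rem:circuit} forces ${\bf p}-{\bf t}$ to be an integer multiple of $\hat{\bf f}$. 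But its ${\bf 0}$-entry equals $(\alpha-1)-t_{\bf 0}$, which is strictly positive yet strictly less than $\alpha$, giving the desired contradiction.

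For the remaining values $1\leq k<\alpha-1$, I would invoke the elementary monotonicity observation used at the end of Proposition~\ref{pro:assPair}: if a non-negative integer vector ${\bf p}$ is optimal for its integer program and ${\bf p}'\leq {\bf p}$ is another non-negative integer vector, then ${\bf p}'$ is optimal for the integer program with right-hand side $A(\textup{logit}(\Set)){\bf p}'$, since any strictly better integer solution would, after translating by ${\bf p}-{\bf p}'$, beat ${\bf p}$ itself. I do not foresee a serious obstacle here; the only slightly non-routine point is the pairing argument that drags $\textup{supp}({\bf t})$ into $\hat{\sigma}$, and this is an immediate consequence of the Lawrence-lifting structure of logit models together with the hypothesis that $\hat{\bf f}$ is a circuit.
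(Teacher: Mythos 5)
Your proposal is correct and takes essentially the same route as the paper: the paper offers no separate proof of this corollary, deriving it by observing that the argument of Proposition~\ref{pro:assPair} uses only the Lawrence/logit kernel antisymmetry and the circuit property of $\hat{\bf f}$, which is exactly what you reproduce. Both your pairing step forcing $\textup{supp}({\bf t})\subseteq\hat{\sigma}$ (via Remark~\ref{rem:circuit}) and your monotonicity reduction for $1\leq k<\alpha-1$ match the paper's treatment.
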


The next proposition claims that each of the associated pairs from the previous proposition 
are in fact standard pairs. 

\begin{proposition} \label{pro:stdPair}
The pair $(k \cdot {\bf e}_{({\bf 0}|0|0)}, \hat{\sigma} \backslash \{ ({\bf 0}|0|0) \})$ is a 
standard pair for $\textup{IP}_{\Delta^n,-}$ for all $1 \leq k \leq 2^{n-3}-1$.
\end{proposition}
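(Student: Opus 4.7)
The plan is to verify maximality of the associated pair $P := (k \cdot {\bf e}_{({\bf 0}|0|0)}, \hat{\sigma} \setminus \{({\bf 0}|0|0)\})$ via a case analysis on its strict enlargements among associated pairs. First I would unpack the definition of containment $P \subsetneq (\gamma', \tau')$: the combined requirements that $\gamma' \leq k \cdot {\bf e}_{({\bf 0}|0|0)}$, that $k \cdot {\bf e}_{({\bf 0}|0|0)} - \gamma' \in \N^{\tau'}$, that $\hat{\sigma} \setminus \{({\bf 0}|0|0)\} \subseteq \tau'$, and that $\textup{supp}(\gamma') \cap \tau' = \emptyset$ force exactly one of the following two scenarios: (I) $\gamma' = k \cdot {\bf e}_{({\bf 0}|0|0)}$ and $\tau' \supseteq (\hat{\sigma} \setminus \{({\bf 0}|0|0)\}) \cup \{j\}$ for some $j \in [2^n] \setminus \hat{\sigma}$; or (II) $\gamma' = {\bf 0}$ and $\tau' \supseteq \hat{\sigma}$. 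In Case (I) it suffices to treat the minimal enlargement $\tau' = (\hat{\sigma} \setminus \{({\bf 0}|0|0)\}) \cup \{j\}$, since non-associativity of a sub-pair persists under further enlargement of $\tau'$.

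Case (II) falls out immediately from the circuit calculation of Section~2: the vector $\hat{\bf u}_n$ lies in $({\bf 0}, \hat{\sigma}) \subseteq (\gamma', \tau')$, but $\Delta^n \hat{\bf u}_n = \Delta^n \hat{\bf v}_n$ with $(\hat{\bf v}_n)_{({\bf 0}|0|0)} = 0 < 2^{n-3} = (\hat{\bf u}_n)_{({\bf 0}|0|0)}$, so $\hat{\bf u}_n$ is not optimal for $\textup{IP}_{\Delta^n,-}$, contradicting that $(\gamma', \tau')$ is associated.

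Case (I) is the technical heart. Writing $j = (\tilde{\bf i}|\tilde{l}|\tilde{l}')$ with $(\tilde{\bf i}|\tilde{l}) \notin \sigma$ (so necessarily $\tilde{\bf i} \neq {\bf 0}$ and either $\tilde{\bf i}$ is even with $\tilde{l} = 0$, or $\tilde{\bf i}$ is odd with $\tilde{l} = 1$), I would produce an integer element ${\bf g} \in \ker \Gamma^n$ with $\textup{supp}({\bf g}) \subseteq \sigma \cup \{(\tilde{\bf i}|\tilde{l})\}$, with $g_{({\bf 0}|0)} = 1$, and with $g_{(\tilde{\bf i}|\tilde{l})} \neq 0$ of the sign dictated by $\tilde{l}'$ (non-negative when $\tilde{l}' = 0$, non-positive when $\tilde{l}' = 1$). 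Existence follows from parametrizing $\ker \Gamma^n$ by pairs $(x,y)$ with $x + y \in \R \cdot \textbf{checker}$ and $\sum x = \sum y = 0$ (analogously to the proof of Lemma~\ref{lem:checker}); imposing the restricted support leaves a two-parameter family (the scalar $c$ of \textbf{checker} and the free coordinate $x_{\tilde{\bf i}}$), and explicit rational-integer choices of $(c, x_{\tilde{\bf i}})$ yield the required ${\bf g}$ in each of the four $(\tilde{l}, \tilde{l}')$-subcases.

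Given such ${\bf g}$, the Lawrence lift $\hat{\bf g} = ({\bf g}, -{\bf g}) \in \ker \Delta^n$ has $\hat{\bf g}_{({\bf 0}|0|0)} = 1$ and positive support inside $\hat{\sigma} \cup \{j\}$. Choosing ${\bf p} \in (\gamma', \tau')$ with sufficiently large free-direction coefficients so that ${\bf p} \geq \hat{\bf g}^+$ componentwise, the vector ${\bf t} := {\bf p} - \hat{\bf g}$ is a non-negative integer solution to $\Delta^n {\bf t} = \Delta^n {\bf p}$ with $t_{({\bf 0}|0|0)} = k - 1 < k = p_{({\bf 0}|0|0)}$, contradicting associativity. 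The main obstacle is the four-way subcase construction of ${\bf g}$: the sign constraint on $g_{(\tilde{\bf i}|\tilde{l})}$ interacts non-trivially with the constraint $g_{({\bf 0}|0)} \in \{1,\dots,k\}$, so care is needed to choose parameters so that the positive support of $\hat{\bf g}$ does not stray outside $\hat{\sigma} \cup \{j\}$. Once this combinatorial case analysis is completed for $k = 2^{n-3} - 1$, the same construction (taking $g_{({\bf 0}|0)} = 1 \leq k$) handles all smaller $k$ uniformly.
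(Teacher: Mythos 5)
Your proposal is correct and follows essentially the same route as the paper: the same dichotomy on $\gamma'$ (root ${\bf 0}$ versus root $k\cdot{\bf e}_{({\bf 0}|0|0)}$), the same use of the non-optimality of $\hat{\bf u}_n$ to kill the first case, and, for each adjoined free direction $j$, a lifted kernel element with $({\bf 0}|0|0)$-entry $1$ and positive support inside $\hat{\sigma}\cup\{j\}$. Your two-parameter kernel parametrization (via $\textbf{checker}$ and the free coordinate at $\tilde{\bf i}$) in fact reproduces exactly the paper's explicit witnesses, namely ${\bf w}^+-{\bf w}^-$ for the sign subcases (a),(b) and ${\bf \hat{f}}_n-(2^{n-3}-1)({\bf w}^+-{\bf w}^-)$ for (c),(d), and your uniform treatment of all $k$ via $\hat{g}^+_{({\bf 0}|0|0)}=1\leq k$ is a clean substitute for the paper's ``scale ${\bf p},{\bf t}$ by $k$'' step.
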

\begin{proof}
It will suffice the consider the case of $k=1$. 
By the previous proposition, we know that the pair is associated. 
Recall that we have a containment of associated pairs 
$(\gamma, \tau) \subsetneq  (\gamma^\prime, \tau^\prime)$ if and only if 
$\gamma^\prime \leq \gamma$ and 
$\textup{supp}(\gamma - \gamma^\prime) \cup \tau \subset \tau^\prime$. 
If $\gamma = {\bf e}_{({\bf 0}|0|0)}$ and 
$\tau = \hat{\sigma} \backslash \{ ({\bf 0}|0|0) \}$ then such a 
$\gamma^\prime$ would equal $1$ or ${\bf e}_{({\bf 0}|0|0)}$. 
If $\gamma^\prime = 1$ then $({\bf 0}|0|0) \in \tau^\prime$ 
and $(1, \hat{\sigma})$ would have to be an associated pair, 
which cannot be the case since the non-optimal solution 
$\hat{\bf u}_n$ is in this pair. 

The other alternative is that $\gamma^\prime = {\bf e}_{({\bf 0}|0|0)}$ 
and in this case we need to show there does not exist an $l \notin \hat{\sigma}$ 
for which the pair 
$({\bf e}_{({\bf 0}|0|0)}, \hat{\sigma} \backslash \{ ({\bf 0}|0|0)  \} \cup \{ l \} )$ 
is an associated pair. Such $l$'s are of one of the following forms: 
(a) $({\bf i}|1|0)$, (b) $({\bf i}|0|1)$, (c) $({\bf i}|0|0)$ or (d) $({\bf i}|1|1)$ 
where ${\bf 0} \neq {\bf i} \in \{ 0,1\}^{n-2}$ as before. Note that regardless of 
the value of ${\bf i} \neq {\bf 0}$, we always have the relation 
$\Delta^n{\bf w}^+ = \Delta^n{\bf w}^-$ where 
$
{\bf w}^+ := {\bf e}_{({\bf 0}|0|0)} + {\bf e}_{({\bf i}|1|0)} 
         + {\bf e}_{({\bf 0}|1|1)} + {\bf e}_{({\bf i}|0|1)} 
$ 
and 
$
{\bf w}^- := {\bf e}_{({\bf 0}|1|0)} + {\bf e}_{({\bf i}|0|0)}
         + {\bf e}_{({\bf 0}|0|1)} + {\bf e}_{({\bf i}|1|1)}.
$ 
\begin{enumerate}
 \item[(a)] By construction, $l = ({\bf i}|1|0)$ is equivalent to $({\bf i}|0|0)$ and 
$({\bf i}|0|1)$ both being elements of $\hat{\sigma}$. We claim that the pair 
$(({\bf 0}|0|0), \hat{\sigma} \backslash \{ ({\bf 0}|0|0) \} \cup \{ ({\bf i}|1|0)\})$ 
violates condition $(\textup{ii})_{-}$. 
To see this notice that the choices of ${\bf p} = {\bf w}^+$ and ${\bf t} = {\bf w}^-$ 
satisfy the following: ${\bf p}$ has support in $\hat{\sigma} \cup ({\bf i}|1|0)$, 
that $\Delta^n {\bf p} = \Delta^n {\bf t}$ and $0 = t_{({\bf 0}|0|0)} < p_{({\bf 0}|0|0)} = 1$ 
Hence, this is such a choice for ${\bf p}$ and ${\bf t}$ violating $(\textup{ii})_{-}$. 
\item[(b)] The exact same choice of ${\bf p}$ and ${\bf t}$ can be made in this 
case as for part (a). 
\item[(c)] Consider the integral vector ${\bf \hat{f}}_n - (2^{n-3}-1)({\bf w}^+ - {\bf w}^-)$. 
This vector is in the kernel of $\Delta^n$ with $({\bf 0}|0|0)$-entry equal to $1$ 
and with positive support wholly contained in $\hat{\sigma} \cup \{ l \}$. Letting 
${\bf p}$ and ${\bf t}$ be the positive part and negative parts respectively of 
${\bf f}_n - (2^{n-3}-1)({\bf w}^+ - {\bf w}^-)$ we have a violation of condition 
$(\textup{ii})_{-}$.
\item[(d)] The exact same choice of ${\bf p}$ and ${\bf t}$ can be made in this 
case as for part (c). 
\end{enumerate}
Thus we have shown the $k=1$ case. For $2 \leq k \leq 2^{n-3}-1$ simply replace 
${\bf p}$ and ${\bf t}$ by $k \cdot {\bf p}$ and $k \cdot {\bf t}$ respectively.
\end{proof}

In the last proposition we created a set of standard pairs that contained the 
optimal solutions ${\bf \hat{u}}_n - (2^{n-3} - k){\bf e}_{({\bf 0} | 0 |0)}$ 
for every $1 \leq k \leq 2^{n-3} -1$. We can now complete the proof of the central 
claim in Theorem~\ref{thm:main}:
\begin{proposition} \label{pro:unique}
The optimal solution ${\bf \hat{u}}_n - {\bf e}_{({\bf 0} | 0 |0)}$ 
is contained in precisely one standard pair, namely, 
$((2^{n-3} -1) \cdot {\bf e}_{({\bf 0}|0|0)}, \hat{\sigma} \backslash \{ ({\bf 0}|0|0) \})$
\end{proposition}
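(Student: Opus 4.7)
The strategy is to show that any standard pair $(\gamma', \tau')$ containing ${\bf u} := \hat{\bf u}_n - {\bf e}_{({\bf 0}|0|0)}$ satisfies $(\gamma', \tau') \subseteq (\gamma^*, \tau^*)$ as pairs, where $(\gamma^*, \tau^*) := ((2^{n-3}-1)\,{\bf e}_{({\bf 0}|0|0)}, \hat{\sigma} \setminus \{({\bf 0}|0|0)\})$; equality then follows from the maximality of $(\gamma', \tau')$, using Proposition~\ref{pro:stdPair} to know $(\gamma^*, \tau^*)$ is associated. The containment unpacks into three requirements: $\gamma^* \leq \gamma'$, $\textup{supp}(\gamma' - \gamma^*) \subseteq \tau^*$, and $\tau' \subseteq \tau^*$.

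The first two fall out once $\gamma'$ is pinned down. Since ${\bf u} \in (\gamma', \tau')$ we have $\gamma' \leq {\bf u}$, so $\textup{supp}(\gamma') \subseteq \textup{supp}(\hat{\bf u}_n) \subseteq \hat{\sigma}$. If $({\bf 0}|0|0) \in \tau'$ then $\gamma'_{({\bf 0}|0|0)} = 0$ and the pair would contain ${\bf u} + {\bf e}_{({\bf 0}|0|0)} = \hat{\bf u}_n$; but $\hat{\bf u}_n$ is the leading term of the Gr\"obner basis element $\hat{\bf f}_n$ and so is non-optimal, violating $(\textup{ii})_{-}$. Thus $({\bf 0}|0|0) \notin \tau'$, which forces $\gamma'_{({\bf 0}|0|0)} = 2^{n-3}-1$, and combined with the support bound gives $\gamma^* \leq \gamma'$ and $\textup{supp}(\gamma'-\gamma^*) \subseteq \hat{\sigma}\setminus\{({\bf 0}|0|0)\} = \tau^*$.

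The main obstacle is the third requirement $\tau' \subseteq \tau^*$, i.e.\ that no $l^* \notin \hat{\sigma}$ lies in $\tau'$. Suppose toward contradiction such an $l^*$ exists; then $l^*$ is of one of the four types (a)--(d) from the proof of Proposition~\ref{pro:stdPair}, and in each case the plan is to exhibit a vector ${\bf p}' \in (\gamma', \tau')$ that is not optimal for its fiber, contradicting associativity. In cases (a) and (b) I would choose ${\bf p}'$ so that ${\bf p}' \geq {\bf w}^+$: the $({\bf 0}|0|0)$-slot is covered because $\gamma'_{({\bf 0}|0|0)} = 2^{n-3}-1 \geq 1$, the $l^*$-slot because $l^* \in \tau'$, and each of the two remaining support positions $m$ of ${\bf w}^+$ lies in $\textup{supp}(\hat{\bf u}_n) \setminus \{({\bf 0}|0|0)\}$, where the dichotomy ``$m \in \tau'$, or else $\gamma'_m = u_m \geq 1$'' makes the slot accessible. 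Then ${\bf t}' := {\bf p}' - {\bf w}^+ + {\bf w}^-$ is a non-negative fiber-mate of ${\bf p}'$ with $t'_{({\bf 0}|0|0)} = p'_{({\bf 0}|0|0)} - 1$, giving non-optimality.

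Cases (c) and (d) are the genuinely delicate part, since ${\bf w}^+$ does not touch $l^*$. Here my plan is to use the kernel element ${\bf g} := \hat{\bf f}_n - ({\bf w}^+ - {\bf w}^-)$ --- note the coefficient $1$, in contrast to the $2^{n-3}-1$ appearing in Proposition~\ref{pro:stdPair} --- and verify by direct computation that $g_{({\bf 0}|0|0)} = 2^{n-3}-1$, that $l^* \in \textup{supp}({\bf g}^+)$, and, crucially, that $\textup{supp}({\bf g}^+) \subseteq \textup{supp}(\hat{\bf u}_n) \cup \{l^*\}$. This last point is the key structural computation: the coefficient $1$ pushes the positive part of ${\bf g}$ out of $\textup{supp}(\hat{\bf v}_n)$ and into $\textup{supp}(\hat{\bf u}_n)$, which is essential because ${\bf p}' \in (\gamma', \tau')$ is not guaranteed any mass on $\textup{supp}(\hat{\bf v}_n)$-slots. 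The same dominance construction as in (a),(b), applied to each index of $\textup{supp}({\bf g}^+)$, then produces ${\bf p}' \in (\gamma', \tau')$ with ${\bf p}' \geq {\bf g}^+$, and ${\bf t}' := {\bf p}' - {\bf g}$ is the required non-optimality witness, completing the contradiction and hence the proof.
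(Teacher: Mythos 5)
Your proposal is correct and follows essentially the same route as the paper's own proof: the same four-case analysis over $l^* \notin \hat{\sigma}$, using the kernel element ${\bf w}^+ - {\bf w}^-$ for cases (a),(b) and $\hat{\bf f}_n - ({\bf w}^+ - {\bf w}^-)$ (with coefficient $1$, as you correctly note, rather than the $2^{n-3}-1$ of Proposition~\ref{pro:stdPair}) for cases (c),(d). Your preliminary step pinning down $\gamma'$ and excluding $({\bf 0}|0|0)$ from $\tau'$ merely makes explicit a reduction the paper leaves implicit.
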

\begin{proof}
The analysis is very similar to that which was carried out in Proposition \ref{pro:stdPair}.
We need to show that for any $l \notin \hat{\sigma}$, there exists $n_l \in \N$ such that 
${\bf \hat{u}}_n - {\bf e}_{({\bf 0} | 0 |0)} + n_l {\bf e}_l$ is not optimal. 
Such $l$'s are of one of the following forms: 
(a) $({\bf i}|1|0)$, (b) $({\bf i}|0|1)$, (c) $({\bf i}|0|0)$ or (d) $({\bf i}|1|1)$ 
where ${\bf 0} \neq {\bf i} \in \{ 0,1\}^{n-2}$ as before. 
The kernel element ${\bf w}^+ - {\bf w}^-$ of $\Delta^n$ is as above.
\begin{enumerate}
 \item[(a)] By construction, $l = ({\bf i}|1|0) \notin \hat{\sigma}$ is equivalent 
to ${\bf 1} \cdot {\bf i}$ being odd. If 
${\bf \hat{u}}_n - {\bf e}_{({\bf 0} | 0 |0)} + n_l {\bf e}_l$ were optimal for every 
$n_l \in \N$ then every vector 
${\bf 0} \leq {\bf z} \leq {\bf \hat{u}}_n - {\bf e}_{({\bf 0} | 0 |0)} + n_l {\bf e}_l$ 
would also be optimal. But, since ${\bf 1} \cdot {\bf i}$ is odd, then ${\bf z} = {\bf w}^+$ 
is such a vector and we already know that this vector is not optimal. 
\item[(b)] The case of $l = ({\bf i}|0|1) \notin \hat{\sigma}$ 
(with ${\bf 1} \cdot {\bf i}$ even) can be argued as in part (a).
\item[(c)] The next case is $l = ({\bf i}|0|0) \notin \hat{\sigma}$ with 
${\bf 1} \cdot {\bf i}$ being even. Here the index vector $({\bf i}|0|0)$ 
is in $\textup{supp}({\bf w}^-)$ and $({\bf i}|0|1)$ is in $\textup{supp}({\bf w}^+)$. 
Similar to the proof of Proposition \ref{pro:stdPair}, part (c), 
let ${\bf p}$ and ${\bf t}$ be the positive part and negative parts respectively of 
${\bf \hat{f}}_n - ({\bf w}^+ - {\bf w}^-)$. In this case, 
${\bf p} \leq {\bf \hat{u}}_n - {\bf e}_{({\bf 0} | 0 |0)} + {\bf e}_l$ and so it would 
need to be optimal if the free direction $l$ were to be allowed. However, 
$0 = t_{({\bf 0} | 0 |0)} < p_{({\bf 0} | 0 |0)} = 2^{n-3}-1$ and so we cannot 
have $l = ({\bf i}|0|0)$ with ${\bf 1} \cdot {\bf i}$ being even as a free direction 
in a standard pair that contains ${\bf \hat{u}}_n - {\bf e}_{({\bf 0} | 0 |0)}$.
\item[(d)] The case of $l = ({\bf i}|1|1) \notin \hat{\sigma}$ with 
${\bf 1} \cdot {\bf i}$ being odd is the same as that made in part (c).
\end{enumerate}
\vspace{-.7cm}
\end{proof}

\section{Closing Remarks}

Rather than asserting that {\em linear 
programming is an effective heuristic for detecting disclosures when 
releasing margins of multi-way tables} our result reopens this 
possibility, proposing that indeed large gaps in small hierarchical models 
do exist but may only rarely be encountered in practice. We have not addressed 
what happens for the other large gaps from Corollary~\ref{cor:degreeGap} 
that occur in the model $\Delta_n$. Nor have we addressed the extent to which 
the rarity encountered here happens for other hierarchical models. We attempt 
to address this by briefly reporting on some computational results. 

From Corollary~\ref{cor:degreeGap} and the discussion preceding it there 
are standard pairs like those from Proposition~\ref{pro:stdPair} whose respective 
images contain the $k$-margins for $1 \leq k \leq 2^{n-3}-2$ respectively. Using 
{\tt Macaulay 2} \cite{M2} we were able to confirm for $n=4$ and $n=5$ that while 
we did not have the uniqueness property of Proposition~\ref{pro:unique} for 
these $k$-gaps it was the case that the standard pairs $(\hat{\gamma}, \hat{\tau})$ 
for all of these margins 
had $|\hat{\tau}| = |\hat{\sigma}|-1$. Hence, for $n=4,5$ the computational evidence 
suggests that each of these $k$-gaps were contained 
in a $2^{n-1}$-dimensional slice of the $(2^{n-1} + 2^{n-2} -1)$-dimensional $M(q)$'s. 
Furthermore, they were all highly skewed along the $({\bf 0}|0|0)$ ray in the same manner 
as discussed after the proof of Theorem~\ref{thm:main} for the $(2^{n-3}-1)$-margins, 
again making these $k$-margins unlikely to be encountered.

Other instances of models with large gaps constructed from Proposition~\ref{pro:degreeGap} 
can be found in \cite[Prop. 2.7]{DevSul}. The binary model there is the collection of all 
edges of the complete graph with $n \geq 4$ vertices and a Gr{\" o}bner basis element 
(with respect to $(1,{\bf 0})$) is found 
there that provides a lower bound on the gap that grows linearly in $n$. In this case too 
the computational evidence using Macaulay 2 indicates that all margins attained from 
Proposition~\ref{pro:degreeGap} occur rarely. In the course of this work the answer to the 
following question seemed to be ``yes'' and may be of independent interest for those 
interested in {\em Markov moves} (see for example \cite[Ch. 1]{DrtStuSul}):

\begin{question} \label{ques:markov}
If ${\bf g} := {\bf u} - {\bf v}$ is a Gr{\" o}bner basis element 
for the matrix of margins for the model $\Set$ then is it true that \textup{(1)} 
$|\textup{supp}({\bf g})| > \textup{rank}(A(\Set))$ implies that all entries of 
${\bf g}$ belong to $\{-1,0,+1\}$~? \textup{(2)} Given any gap arising from 
Proposition~\ref{pro:degreeGap} is it true that its standard pair is always 
of the form $(\gamma, \tau)$ where $\tau \subsetneq \textup{supp}({\bf g})$~?
\end{question}

Thomas Kahle computationally verified that (1) is true for all models recorded at \cite{MBDB}. 
Note that if both (1) and (2) are true then every gap attained from 
Proposition~\ref{pro:degreeGap} would be rare in the sense of Theorem~\ref{thm:main}. 

Finally, the gaps coming from Proposition~\ref{pro:degreeGap} are not the only 
way that gaps can arise. The gap can be computed precisely \cite{HosStu} by solving a 
collection of {\em group relaxations} \cite[Ch. 24]{SchBlue} coming from the 
collection of standard pairs for $\textup{IP}_{\Delta^n, -}$. Using Macaulay 2, in 
the case of $n=5$ there are $1280$ such standard pairs $(\gamma, \hat{\tau})$ that 
need to be considered and $1013$ produce a gap greater than $0$. But when checked 
computationally for the $n=5$ case each of the standard pairs that had gap greater than 
or equal to 1 were exactly those that had the number of free directions strictly less 
(\& considerably less) than the rank of $\Delta^5$. Similarly for $n=4$ and $n=5$ in 
the case of the model studied in \cite[Prop. 2.7]{DevSul}.

In conclusion the computations using Macaulay 2 suggest that the results of 
Section 3 may be the typical scenario, that the gaps provided from 
Proposition~\ref{pro:degreeGap} may always be rare and furthermore that other gaps 
greater than or equal to $1$ may be equally rare. Thus the computations 
lend further support to linear programming being an effective heuristic 
for detecting disclosures when releasing margins of multi-way tables.

\section*{Acknowledgements} Many thanks to Seth Sullivant for suggesting the main problem 
of this paper and for answering my questions in the initial stages of this work. 
Thanks too to Thomas Kahle for computationally verifying the validity of part (1) of 
Question~\ref{ques:markov} for the many models recorded at the Markov Bases Database. 
A final acknowledgement to the developers 
of Macaulay 2 and 4ti2 without whose efforts the initial computational explorations 
in this work would have been a great deal more difficult.

\end{document}